\newcommand{\dd}{\mathrm{d}}
\newcommand{\E}{\mathbb{E}}
\newcommand{\1}{\textbf{1}}
\newcommand{\R}{\mathbb{R}}
\newcommand{\p}[1]{\mathbb{P}\left( #1 \right)}
\newcommand{\Ent}{\mathcal{S}}
\newcommand{\vp}{\varphi}
\newcommand{\vertiii}[1]{{\left\vert\kern-0.25ex\left\vert\kern-0.25ex\left\vert #1 
    \right\vert\kern-0.25ex\right\vert\kern-0.25ex\right\vert}}
\newcommand{\scal}[2]{\left\langle #1, #2 \right\rangle}
\newtheorem{lemma}{Lemma}
\newtheorem{theorem}{Theorem}
\newtheorem{proposition}{Proposition}
\theoremstyle{remark}
\newtheorem*{remark*}{Remark}
\newtheorem{remark}{Remark}
\theoremstyle{definition}
\newtheorem{conjecture}{Conjecture}
\title{A reverse entropy power inequality for log-concave random vectors}
\author{Keith Ball, Piotr Nayar\thanks{Supported in part by the Institute for Mathematics and its Applications with funds provided by the National Science Foundation; supported in part by NCN grant DEC-2012/05/B/ST1/00412} \ and Tomasz Tkocz}
\date{18/09/2015}
\begin{document}

\maketitle

\begin{abstract}
We prove that the exponent of the entropy of one dimensional projections of a log-concave random vector defines a $1/5$-seminorm. We make two conjectures concerning reverse entropy power inequalities in the log-concave setting and discuss some examples.
\end{abstract}

\noindent {\bf 2010 Mathematics Subject Classification.} Primary 94A17; Secondary 52A40, 60E15.

\noindent {\bf Key words.} entropy, log-concave, reverse entropy power inequality.

\section{Introduction}\label{sec:intro}

One of the most significant and mathematically intriguing quantities studied in information theory is the \emph{entropy}. For a random variable $X$ with density $f$ its entropy is defined as
\begin{equation}\label{eq:defent}
\Ent(X) = \Ent(f) = -\int_\R f \ln f
\end{equation}
provided this integral exists (in the Lebesgue sense). Note that the entropy is translation invariant and $\Ent(bX) = \Ent(X) + \ln|b|$ for any nonzero $b$. If $f$ belongs to $L_p(\R)$ for some $p > 1$, then by the concavity of the logarithm and Jensen's inequality $\Ent(f) > -\infty$.
If $\E X^2 < \infty$, then comparison with the standard Gaussian density and again Jensen's inequality yields $\Ent(X) < \infty$. Particularly, the entropy of a log-concave random variable is well defined and finite. Recall that a random vector in $\R^n$ is called log-concave if it has a density of the form $e^{-\psi}$ with $\psi:\R^n\to(-\infty,+\infty]$ being a convex function.

The entropy power inequality (EPI) says that
\begin{equation}\label{eq:entpower}
e^{\frac{2}{n}\Ent(X+Y)} \geq e^{\frac{2}{n}\Ent(X)} + e^{\frac{2}{n}\Ent(Y)},
\end{equation}
for independent random vectors $X$ and $Y$ in $\R^n$ provided that all the entropies exist. Stated first by Shannon in his seminal paper \cite{Sh} and first rigorously proved by Stam in \cite{St} (see also \cite{Bl}), it is often referred to as the Shannon-Stam inequality and plays a crucial role in information theory and elsewhere (see the survey \cite{DCT}). Using the AM-GM inequality, the EPI can be \emph{linearised}: for every $\lambda \in [0,1]$ and independent random vectors $X, Y$ we have
\begin{equation}\label{eq:entpowerlin}
\Ent(\sqrt{\lambda}X + \sqrt{1-\lambda}Y) \geq \lambda\Ent(X) + (1-\lambda)\Ent(Y)
\end{equation}
provided that all the entropies exist.
This formulation is in fact equivalent to \eqref{eq:entpower} as first observed by Lieb in \cite{L}, where he also shows how to derive \eqref{eq:entpowerlin} from Young's inequality with sharp constants. Several other proofs of \eqref{eq:entpowerlin} are available, including refinements \cite{Cos}, \cite{Dem}, \cite{V}, versions for the Fisher information \cite{Car} and recent techniques of the minimum mean-square error \cite{VG}.

If $X$ and $Y$ are independent and identically distributed random variables (or vectors), inequality \eqref{eq:entpowerlin} says that the entropy of the normalised sum
\begin{equation}\label{eq:Xlambda}
X_\lambda = \sqrt{\lambda}X + \sqrt{1-\lambda}Y 
\end{equation}
is at least as big as the entropy of the summands $X$ and $Y$, $\Ent(X_\lambda) \geq \Ent(X)$. It is worth mentioning that this phenomenon has been quantified, first in \cite{CS}, which has deep consequences in probability (see the pioneering work \cite{BBN} and its sequels \cite{ABBN1, ABBN2} which establish the rate of convergence in the entropic central limit theorem and the ``second law of probability'' of the entropy growth, as well as the independent work \cite{JB}, with somewhat different methods). In the context of log-concave vectors, Ball and Nguyen in \cite{BN} establish dimension free lower bounds on $\Ent(X_{1/2}) -\Ent(X)$ and discuss connections between the entropy and major conjectures in convex geometry; for the latter see also \cite{BM2}.

In general, the EPI cannot be reversed. In \cite {BCh}, Proposition V.8, Bobkov and Christyakov find a random vector $X$ with a finite entropy such that $\Ent(X+Y) = \infty$ for every independent of $X$ random vector $Y$ with finite entropy. However, for log-concave vectors and, more generally, convex measures, Bobkov and Madiman have recently addressed the question of reversing the EPI (see \cite{BM, BM1}). They show that for any pair $X,Y$ of independent log-concave random vectors in $\R^n$, there are linear volume preserving maps $T_1, T_2:\R^n \to \R^n$ such that 
\[
e^{\frac{2}{n}\Ent(T_1(X)+T_2(Y))}\leq C(e^{ \frac{2}{n}\Ent(X)} + e^{ \frac{2}{n} \Ent(Y)}),
\]
where $C$ is some universal constant. 

The goal of this note is to further investigate in the log-concave setting some new forms of what could be called a reverse EPI. In the next section we present our results. The last section is devoted to their proofs.

\section*{Acknowledgements}

The authors would like to thank Assaf Naor for pointing out the Aoki-Rolewicz theorem as well as for fruitful discussions without which Theorem \ref{thm:kappa} would not have been discovered. They are also indebted to Mokshay Madiman for his help with tracking down several references.

\section{Main results and conjectures}\label{sec:results}

Suppose $X$ is a symmetric log-concave random vector in $\R^n$. Then any projection of $X$ on a certain direction $v \in \R^n$, that is the random variable $\scal{X}{v}$ is also log-concave. Here $\scal{\cdot}{\cdot}$ denotes the standard scalar product in $\R^n$. If we know the entropies of projections in, say two different directions, can we say anything about the entropy of projections in related directions? We make the following conjecture.

\begin{conjecture}\label{conj:norm}
Let $X$ be a symmetric log-concave random vector in $\R^n$. Then the function
\[ 
N_X(v) = \begin{cases}
e^{\Ent(\scal{v}{X})} & v \neq 0, \\
0 & v = 0
\end{cases} \]
defines a norm on $\R^n$.
\end{conjecture}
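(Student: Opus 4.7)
The plan is to check the three axioms of a norm for $N_X$. Positivity for $v\neq 0$ and positive homogeneity are straightforward. For homogeneity, the scaling rule $\Ent(bZ)=\Ent(Z)+\ln|b|$ recalled in the introduction gives
\[
N_X(\lambda v) = e^{\Ent(\lambda \scal{v}{X})} = |\lambda|\, e^{\Ent(\scal{v}{X})} = |\lambda|\, N_X(v).
\]
For positivity, if $X$ is genuinely $n$-dimensional log-concave and $v\neq 0$, then $\scal{v}{X}$ is a nontrivial one-dimensional log-concave random variable with finite entropy, so $N_X(v)>0$.

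The real content is the triangle inequality. Fix $v,w\in\R^n$ and consider the two-dimensional random vector $Y=(\scal{v}{X},\scal{w}{X})$, which is the linear image of a symmetric log-concave vector and is therefore itself symmetric log-concave. Since $N_X(v),N_X(w),N_X(v+w)$ depend only on the law of $Y$, the conjecture reduces to the following two-dimensional statement: for every symmetric log-concave $(V,W)$ in $\R^2$,
\[
e^{\Ent(V+W)} \le e^{\Ent(V)} + e^{\Ent(W)}.
\]
By the homogeneity just established we may rescale so that $e^{\Ent(V)}=e^{\Ent(W)}=1$, and the goal becomes $\Ent(V+W)\le\ln 2$. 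Equality holds in the diagonal case $V=W$, so the task is to show that pushing the joint law off the diagonal never raises the entropy of the sum.

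My approach to this two-dimensional inequality would be variational. The functional $f\mapsto\Ent(V+W)$ is concave in the joint density $f$ (entropy composed with an affine pushforward), so its supremum over the convex class of symmetric log-concave densities with fixed normalized marginals should be attained at extreme points, which enjoy rigid Fradelizi-type structure and can in principle be enumerated. A complementary route is a Prékopa--Leindler-style pointwise comparison: writing the density of the sum as $f_{V+W}(s)=\int f_{V,W}(t,s-t)\,\dd t$ and using joint log-concavity along the diagonal $t=\alpha s$ to bound this integrand in terms of $f_V$ and $f_W$ at well-chosen points, one can try to integrate the resulting pointwise estimate against $-\ln f_{V+W}$ to obtain the required entropy bound.

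The decisive difficulty is the sharpness of the constant $1$. Any argument routed through the standard comparison $\|f\|_\infty^{-1}\le e^{\Ent(f)}\le e\,\|f\|_\infty^{-1}$ for one-dimensional log-concave $f$ immediately loses the correct constant and produces at best a quasi-norm, which is precisely how the authors arrive at the $1/5$-seminorm of Theorem~\ref{thm:kappa}. Obtaining the true triangle inequality requires exploiting the full two-dimensional log-concavity and symmetry of $(V,W)$, not merely the one-dimensional log-concavity of its marginals and of $V+W$, so as to rule out anisotropic couplings that could in principle inflate $\Ent(V+W)$ above $\ln 2$. This is the step I expect to be the genuine obstacle, and presumably the reason the statement is left as a conjecture.
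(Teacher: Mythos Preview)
The statement is a conjecture, and the paper does not prove it; what the paper establishes is the weaker Theorem~\ref{thm:kappa} with exponent $\kappa=1/5$ in place of $1$. Your reduction of the triangle inequality to the two-dimensional inequality \eqref{eq:2dim} is exactly the reduction the paper makes, and your closing diagnosis---that the sharp constant is the real obstruction and that any argument routed through the comparison \eqref{eq:entvsmax} necessarily loses it---is correct and matches the authors' own view.

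There is, however, a genuine gap in your proposed variational route. You assert that the symmetric log-concave densities with prescribed marginals form a convex class, so that the concave functional $f\mapsto\Ent(V+W)$ attains its supremum at extreme points. But log-concave densities do \emph{not} form a convex set: a convex combination of two log-concave functions is in general not log-concave (take two well-separated Gaussians). Hence the concavity of the entropy functional gives no control over where the supremum lives inside the log-concave class, and the extreme-point structure you invoke is unavailable without substantial further argument. This undercuts the first of your two suggested approaches; the second (a Pr\'ekopa--Leindler-type pointwise bound integrated against $-\ln f_{V+W}$) is left too schematic to assess. So while your framing of the problem and your identification of the difficulty are accurate, neither proposed line of attack actually gets off the ground---consistent with the statement remaining a conjecture.
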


The homogeneity of $N_X$ is clear. To check the triangle inequality, we have to answer really a two-dimensional question: \emph{is it true that for a symmetric log-concave random vector $(X,Y)$ in $\R^2$ we have}
\begin{equation}
\label{eq:2dim}
e^{\Ent(X+Y)} \leq e^{\Ent(X)} + e^{\Ent(Y)}?
\end{equation}
Indeed, this applied to the vector $(\scal{u}{X},\scal{v}{X})$ which is also log-concave yields $N_X(u+v) \leq N_X(u) + N_X(v)$. Inequality \eqref{eq:2dim} can be seen as a reverse EPI, cf. \eqref{eq:entpower}. It is not too difficult to show that this inequality holds up to a multiplicative constant.

\begin{proposition}\label{prop:uptoconst}
Let $(X,Y)$ be a symmetric log-concave random vector on $\R^2$. Then 
\[
	e^{\Ent(X+Y)} \leq e\left( e^{\Ent(X)} +  e^{\Ent(Y)}  \right).
\]
\end{proposition}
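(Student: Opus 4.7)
The plan is to bypass the entropy directly and work instead with the reciprocals of the maximum values of the one-dimensional projection densities, for which a Busemann-type triangle inequality is available in the plane.

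First, I would recall the standard sandwich for any log-concave density $g$ on $\R$:
\[
\frac{1}{\|g\|_\infty} \leq e^{\Ent(g)} \leq \frac{e}{\|g\|_\infty}.
\]
The lower bound is immediate from $g \leq \|g\|_\infty$. The upper bound (essentially the fact that among log-concave densities with a prescribed peak the two-sided exponential maximises the entropy) can be proved by translating the mode to $0$ and writing $g = g(0) e^{-\psi}$ with $\psi$ convex, $\psi(0) = 0$; the claim becomes $\int g\psi \leq \int g$, which, after the change of variable $u = \psi(x)$ on each half-line, reduces to $\int_0^\infty (u-1)e^{-u} h(u)\,du \leq 0$ for a non-increasing weight $h \geq 0$, and this holds because $(u-1)e^{-u}$ integrates to zero and is non-positive exactly on $[0,1]$.

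Since $(X,Y)$ is symmetric log-concave, each of $f_X, f_Y, f_{X+Y}$ is a symmetric log-concave density whose maximum is attained at $0$. Applying the sandwich to both sides of the inequality we wish to prove reduces the Proposition to the max-density triangle inequality
\[
\frac{1}{f_{X+Y}(0)} \leq \frac{1}{f_X(0)} + \frac{1}{f_Y(0)}. \qquad (\star)
\]
Writing $p$ for the joint density of $Z=(X,Y)$ and using the formula $f_{\scal{v}{Z}}(0) = |v|^{-1}\int_{v^\perp} p$, I would observe that $(\star)$ is the triangle inequality, applied at $(e_1, e_2, e_1+e_2)$, for the function $v \mapsto |v|\big/\int_{v^\perp} p$. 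That this function is a norm on $\R^2$ is Busemann's classical theorem when $p$ is proportional to the indicator of a symmetric convex body, and in full generality it is the planar case of Ball's functional Busemann-type theorem for even log-concave densities (equivalently, the statement that $v \mapsto (\int_0^\infty p(rv)\,dr)^{-1}$ is a norm on $\R^2$ for every even log-concave $p$), which transfers to the chord function of $p$ via a rotation by $\pi/2$.

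\textbf{Main obstacle.} The technical heart is this planar Busemann-type norm property for log-concave measures. Its counterpart for symmetric convex bodies is elementary, but a naive layer-cake averaging is not sufficient to pass to general log-concave densities: integrating Busemann's inequality over the level sets $\{p > t\}$ would express the quantity of interest as a sum of reciprocals of norms, and such a sum is not in general the reciprocal of a norm. Once the functional Busemann-Ball statement is accepted, the remainder is a bookkeeping combination with the one-dimensional sandwich that yields exactly the constant $e$ in the Proposition.
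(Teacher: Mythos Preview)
Your proposal is correct and follows essentially the same route as the paper: both reduce the inequality, via the one-dimensional log-concave sandwich $\|g\|_\infty^{-1}\le e^{\Ent(g)}\le e\,\|g\|_\infty^{-1}$, to the peak inequality $f_{X+Y}(0)^{-1}\le f_X(0)^{-1}+f_Y(0)^{-1}$, and then obtain the latter from Ball's functional Busemann theorem for even log-concave densities. The only cosmetic difference is that the paper applies the norm $\|x\|_w=(\int w(tx)\,dt)^{-1}$ directly at $e_1,e_2,e_2-e_1$, whereas you phrase the same step via the section function $v\mapsto |v|/\int_{v^\perp}p$ and a rotation by $\pi/2$.
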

\begin{proof}
The argument relies on the well-known observation that for a log-concave density $f\colon\R\longrightarrow[0,+\infty)$ its maximum and entropy are related (see for example \cite{BN} or \cite{BM2}),
\begin{equation}\label{eq:entvsmax}
-\ln\|f\|_\infty \leq \Ent(f) \leq 1-\ln\|f\|_\infty.
\end{equation}

Suppose that $w$ is an even log-concave density of $(X,Y)$. The densities of $X, Y$ and $X+Y$ equal respectively
\begin{equation}\label{eq:fgh}
f(x) = \int w(x,t)\dd t, \qquad
g(x) = \int w(t,x)\dd t, \qquad
h(x) = \int w(x-t,t)\dd t.
\end{equation}
They are even and log-concave, hence attain their maximum at zero. By the result of Ball (Busemann's theorem for symmetric log-concave measures, see \cite{Ba}), the function $
\|x\|_w = (\int w(tx)\dd t)^{-1}$
is a norm on $\R^2$. Particularly,
\begin{align*} 
\frac{1}{\|h\|_\infty} & = \frac{1}{h(0)} = \frac{1}{\int w(-t,t)\dd t} = \|e_2-e_1\|_w 
\leq \|e_1\|_w + \|e_2\|_w  \\ & = \frac{1}{\int w(t,0)\dd t} + \frac{1}{\int w(0,t)\dd t}  = \frac{1}{f(0)} + \frac{1}{g(0)}
= \frac{1}{\|f\|_\infty} + \frac{1}{\|g\|_\infty}. 
\end{align*}
Using \eqref{eq:entvsmax} twice we obtain
\[
e^{\Ent(X+Y)} \leq \frac{e}{\|h\|_\infty} \leq e\cdot\left(\frac{1}{\|f\|_\infty} + \frac{1}{\|g\|_\infty}\right) 
\leq  e\cdot\left(e^{\Ent(X)} + e^{\Ent(Y)}\right).
\]
\end{proof}

Recall that the classical result of Aoki and Rolewicz says that a $C$-quasi-norm (1-homogeneous function satisfying the triangle inequality up to a multiplicative constant $C$) is equivalent to some $\kappa$-semi-norm ($\kappa$-homogeneous function satisfying the triangle inequality) for some $\kappa$ depending only on $C$ (to be precise, it is enough to take $\kappa = \ln 2/\ln(2C)$). See for instance Lemma 1.1 and Theorem 1.2 in \cite{KPR}.
In view of Proposition \ref{prop:uptoconst}, for every symmetric log-concave random vector $X$ in $\R^n$ the function $N_X(v)^\kappa = e^{\kappa\Ent(\scal{X}{v})}$ with $\kappa = \frac{\ln 2}{1 + \ln 2}$ is equivalent to some nonnegative $\kappa$-semi-norm. Therefore, it is natural to relax Conjecture \ref{conj:norm} and ask whether there is a positive universal constant $\kappa$ such that the function $N_X^\kappa$ itself satisfies the triangle inequality for every symmetric log-concave random vector $X$ in $\R^n$. Our main result answers this question positively.

\begin{theorem}\label{thm:kappa}
There exists a universal constant $\kappa >0$ such that for a symmetric log-concave random vector $X$ in $\R^n$ and two vectors $u,v \in \R^n$ we have 
\begin{equation}\label{eq:kappaRn}
e^{\kappa \Ent(\scal{u+v}{X})} \leq e^{\kappa \Ent(\scal{u}{X})}+e^{\kappa \Ent(\scal{v}{X})}.
\end{equation}
Equivalently, for a symmetric log-concave random vector $(X,Y)$ in $\R^2$ we have
\begin{equation}\label{eq:kappaR2}
e^{\kappa \Ent(X+Y))} \leq e^{\kappa \Ent(X)}+e^{\kappa \Ent(Y)}.
\end{equation}
In fact, we can take $\kappa = 1/5$.
\end{theorem}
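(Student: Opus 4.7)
The plan is to prove the two-dimensional inequality~\eqref{eq:kappaR2} with $\kappa = 1/5$, from which~\eqref{eq:kappaRn} follows by applying the $\R^2$ statement to the symmetric log-concave pair $(\scal{u}{X}, \scal{v}{X})$, exactly as in the remark preceding Proposition~\ref{prop:uptoconst}.

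Proposition~\ref{prop:uptoconst} already tells us that $N(v) := e^{\Ent(\scal{v}{X})}$ is an $e$-quasi-norm, and Aoki--Rolewicz then gives equivalence of $N^\kappa$ to some $\kappa$-seminorm for $\kappa = \ln 2/(1 + \ln 2) \approx 0.41$. The present theorem asserts the stronger fact that $N^{1/5}$ itself satisfies the triangle inequality, so more work is needed. Normalising $a := e^{\Ent(X)} = 1$ and writing $b := e^{\Ent(Y)} \in (0,1]$, $c := e^{\Ent(X+Y)}$, the goal becomes $c \le (1 + b^{1/5})^5$. When $b$ is bounded away from $0$ the bound $c \le e(1 + b)$ from Proposition~\ref{prop:uptoconst} already implies this by elementary calculus. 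The nontrivial regime is $b \to 0$: there $(1 + b^{1/5})^5 \sim 1 + 5 b^{1/5}$, while Proposition~\ref{prop:uptoconst} only gives $c \le e(1 + b) \to e \neq 1$, which is useless.

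To handle the peaked regime I would produce a quantitative perturbation estimate of the form $c - 1 \le C b^{1/5}$ with a controlled constant $C$, by revisiting the two ingredients of Proposition~\ref{prop:uptoconst}: (i) Ball's Busemann theorem $h(0)^{-1} \le f(0)^{-1} + g(0)^{-1}$, which when $g(0)$ is much larger than $f(0)$ sharpens to $h(0)^{-1} \le f(0)^{-1}(1 + O(b))$; and (ii) the entropy-maximum relation~\eqref{eq:entvsmax}, which controls $e^{\Ent(f)}$ in terms of $\|f\|_\infty^{-1}$ only up to the multiplicative ``shape factor'' $\beta := e^{\Ent(f)}\|f\|_\infty \in [1,e]$.

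The main obstacle is that the shape factors $\beta_h$ and $\beta_f$ can a priori conspire adversarially ($\beta_h = e$, $\beta_f = 1$), which would wipe out the gain from step~(i). Overcoming this is the technical heart of the argument and requires a ``shape-comparison'' statement: when the marginal $g$ of $Y$ is extremely peaked, the density $h$ of $X + Y$ must be close in shape to the density $f$ of $X$. Such a statement should follow from a Brunn--Minkowski analysis of the level sets of the joint density $w$, which are symmetric convex bodies in $\R^2$ with one very thin direction in the peaked regime; alternatively one may try to work directly with the Busemann norm associated with $w$ and study its behaviour in directions close to the $Y$-axis. Once the shape-comparison is in hand, combining it with the sharpened Busemann step produces $c - 1 \le C b^{1/5}$ with a small enough constant, and the specific exponent $\kappa = 1/5$ emerges from balancing the constants at this last step.
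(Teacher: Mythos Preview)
Your two-regime split is exactly the structure of the paper's proof, and your handling of the ``large $b$'' regime via Proposition~\ref{prop:uptoconst} is correct and matches. The gap is in the ``small $b$'' regime, where your proposed shape-comparison lemma (that $\beta_h \approx \beta_f$ when $g$ is very peaked) is precisely the whole difficulty, and you have not given a mechanism to prove it. Neither a Brunn--Minkowski analysis of the level sets of $w$ nor a study of the Busemann norm near the $Y$-axis obviously yields a quantitative bound on $\beta_h/\beta_f - 1$; both tools control $h(0)$, not the full shape of $h$ away from $0$ that determines $\beta_h$. Also, your target $c-1 \le C b^{1/5}$ is miscalibrated: if such a bound held with some $C$, the exponent $1/5$ would already be baked in rather than ``emerging from balancing constants''. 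What one actually needs (and what the paper obtains) is the linear estimate $\ln c \le C b$ for small $b$, with $\kappa = 1/5$ then arising because the constant $C$ is large.

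The paper's route is different from yours. It first passes to the equivalent linearised statement (Theorem~\ref{thm:kappalin}): for $(X',Y')$ with $\Ent(X') = \Ent(Y')$, bound $\Ent(\theta X' + (1-\theta)Y') - \Ent(X')$. Your small-$b$ regime becomes small $\theta$. The paper then differentiates in $\theta$ and shows
\[
\frac{\dd}{\dd\theta}\Ent(\theta X' + (1-\theta)Y') = -\E\!\left[\frac{f_\theta'(X_\theta)}{f_\theta(X_\theta)}(X'-Y')\right],
\]
and bounds this expectation by a purely analytic lemma (Lemma~\ref{lm:derivative}): for any even log-concave $w$ on $\R^2$ with marginal $f(x)=\int w(x,y)\,\dd y$,
\[
\iint \frac{-f'(x)}{f(x)}\,y\,w(x,y)\,\dd x\,\dd y \le 30\,\gamma \int w,\qquad \gamma=\frac{\int w(0,y)\,\dd y}{\int w(x,0)\,\dd x}.
\]
Applying this with $w = w_\theta$ the density of $(X_\theta, X'-Y')$ and controlling $\gamma_\theta$ via the Busemann norm gives the derivative bound $\le 60(1+e)$ for $\theta \le \tfrac{1}{2(1+e)}$, hence $\Ent(X_\theta) - \Ent(X') \le 60(1+e)\theta$ (Proposition~\ref{prop:smalltheta}). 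This is the precise ``$h$ close to $f$'' statement you were groping for, obtained not by comparing shapes directly but by bounding a derivative. The value $\kappa = 1/5$ is then fixed by checking $e^\kappa \le \theta_0^\kappa + (1-\theta_0)^\kappa$ at the crossover $\theta_0 = (60(1+e))^{-1}$.
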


\begin{remark}\label{rem:optkappa}
If we take $X$ and $Y$ to be independent random variables uniformly distributed on the intervals $[0,t]$ and $[0,1]$ with $t < 1$, then \eqref{eq:kappaR2} becomes $e^{\kappa t/2} \leq 1 + t^\kappa$. Letting $t \to 0$ shows that necessarily $\kappa \leq 1$. We believe that this is the extreme case and the optimal value of $\kappa$ equals $1$.
\end{remark}

\begin{remark}\label{rem:samemarginals}
Inequality \eqref{eq:kappaR2} with $\kappa = 1$ can be easily shown for log-concave random vectors $(X,Y)$ in $\R^2$ for which one marginal has the same law as the other one rescaled, say $Y \sim tX$ for some $t > 0$. Note that the symmetry of $(X,Y)$ is not needed here. This fact in the essential case of $t=1$ was first observed in \cite{CZ}. We recall the argument in the next section. Moreover, in that paper the converse was shown as well: given a density $f$, the equality 
\[
\max\{\Ent(X+Y), \ X\sim f, Y\sim f \} = {\Ent(2X)} \]
holds if and only if $f$ is log-concave, thus characterizing log-concavity. For some bounds on $\Ent(X\pm Y)$ in higher dimensions see \cite{MK} and \cite{BM1}.  
\end{remark}

It will be much more convenient to prove Theorem \ref{thm:kappa} in an equivalent form, obtained by linearising inequality \eqref{eq:kappaR2}.

\begin{theorem}\label{thm:kappalin}
Let $(X,Y)$ be a symmetric log-concave vector in $\R^2$ and assume that $\Ent(X)=\Ent(Y)$.  Then for every $\theta \in[0,1]$ we have
\begin{equation}\label{eq:kappalin}
	\Ent(\theta X+(1-\theta) Y) \leq S(X) +\frac{1}{\kappa}\ln(\theta^\kappa + (1-\theta)^\kappa),
\end{equation}
where $\kappa>0$ is a universal constant. We can take $\kappa = 1/5$.
\end{theorem}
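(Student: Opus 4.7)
The plan is to reduce, by rescaling $X$ and $Y$ separately, to the case $\Ent(X)=\Ent(Y)=0$ (which preserves both the joint symmetric log-concavity and the equal-entropy hypothesis), and then prove
\[
\Ent(\theta X+(1-\theta)Y)\leq \frac{1}{\kappa}\ln\!\bigl(\theta^\kappa+(1-\theta)^\kappa\bigr)
\]
for $\theta\in[0,1/2]$; the range $\theta\in[1/2,1]$ then follows by the $\theta\leftrightarrow 1-\theta$ symmetry of the right-hand side.

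The first ingredient is the maximum-density bound from Ball's Busemann-type theorem already used in the proof of Proposition \ref{prop:uptoconst}. Write $w$ for the joint density, $f,g$ for the marginals of $X,Y$, and $h_\theta$ for the density of $Z_\theta:=\theta X+(1-\theta)Y$. A direct computation gives $h_\theta(0)=1/\|(1-\theta)e_1-\theta e_2\|_w$, and the triangle inequality for $\|\cdot\|_w$ yields
\[
h_\theta(0)^{-1}\leq (1-\theta)\|e_1\|_w+\theta\|e_2\|_w,
\]
which is a convex combination of reciprocals of the marginal densities at zero. Combining this with the log-concave sandwich $-\ln\|f\|_\infty\leq\Ent(f)\leq 1-\ln\|f\|_\infty$ and the resulting normalization $f(0),g(0)\in[1,e]$, one already obtains a uniform bound $\Ent(Z_\theta)\leq 1$. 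That bound is strong enough in the bulk of the range, where $\tfrac{1}{\kappa}\ln(\theta^\kappa+(1-\theta)^\kappa)\geq 1$.

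The remaining and main work is the boundary regime $\theta\to 0^+$, because there the target drops below $1$ and in fact decays like $\tfrac{1}{\kappa}\theta^\kappa$. The plan is to exploit that $h_\theta\to g$ pointwise as $\theta\to 0$ (with $\Ent(g)=0$) together with a quantitative control of the rate at which the ``shape defect'' $\Ent(h_\theta)+\ln h_\theta(0)\in[0,1]$ equilibrates to its $\theta=0$ value. Writing
\[
h_\theta(z)=\frac{1}{1-\theta}\int_{\R} w\!\left(x,\tfrac{z-\theta x}{1-\theta}\right)\mathrm{d}x
\]
and expanding in small $\theta$, the joint log-concavity of $w$ is the key ingredient to extract the required power-law rate; balancing this boundary bound against the Ball estimate in the bulk then produces the explicit value $\kappa=1/5$.

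The main obstacle is precisely this quantitative boundary estimate, uniformly over admissible $w$: the entropy functional is not Lipschitz under natural metrics on densities, so pointwise convergence $h_\theta\to g$ does not automatically deliver the $\theta^\kappa$ rate. Extracting it requires genuine use of joint log-concavity, and it is the interplay between the resulting exponent and the unavoidable constant $e$ entering Ball's bound that pins down $\kappa=1/5$.
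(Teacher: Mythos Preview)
Your overall architecture matches the paper's: a uniform bound $\Ent(Z_\theta)\leq 1$ via Ball's Busemann-type norm handles $\theta$ bounded away from $0$, and a separate estimate near $\theta=0$ handles the boundary. But the boundary estimate is the entire content of the proof, and your proposal does not supply one. Saying that ``joint log-concavity of $w$ is the key ingredient to extract the required power-law rate'' and that this is ``the main obstacle'' is a description of the problem, not a solution.

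The paper's boundary argument is also rather different in spirit from what you sketch. It does not aim for a $\theta^\kappa$ rate; instead it differentiates $\theta\mapsto\Ent(\theta X+(1-\theta)Y)$ and shows the derivative is bounded by the absolute constant $60(1+e)$ for $\theta\leq\tfrac{1}{2(1+e)}$ (Proposition~\ref{prop:smalltheta}), yielding the \emph{linear} bound $\Ent(Z_\theta)\leq\Ent(X)+60(1+e)\theta$. Since any linear function lies below $\tfrac1\kappa\theta^\kappa$ near $0$ when $\kappa<1$, this suffices; the value $\kappa=1/5$ is then fixed by the single numerical inequality $e^\kappa\leq\theta_0^\kappa+(1-\theta_0)^\kappa$ at the crossover point $\theta_0=(60(1+e))^{-1}$. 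The derivative computation reduces to the expression $-\E\bigl[f_\theta'(X_\theta)(X-Y)/f_\theta(X_\theta)\bigr]$, and bounding this is the substantial technical step (Lemma~\ref{lm:derivative}): it requires a pointwise tail estimate on $w$ extracted from log-concavity, a Gr\"unbaum-type centroid bound, and the Busemann norm once more to control a ratio $\gamma_\theta$ of section integrals. None of this machinery is suggested by ``expanding in small $\theta$'', and the rate you state you need ($\theta^\kappa$) would, if pursued directly, demand a stronger conclusion than the argument actually delivers or requires.
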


\begin{remark}\label{rem:kappa1}
Proving Conjecture \ref{conj:norm} is equivalent to showing the above theorem with $\kappa = 1$.
\end{remark}

Notice that in the above reverse EPI we estimate the entropy of linear combinations of summands whose joint distribution is log-concave. This is different from what would be the straightforward reverse form of the EPI \eqref{eq:entpowerlin} for independent summands with weights $\sqrt{\lambda}$ and $\sqrt{1-\lambda}$ preserving variance. Suppose that the summands $X$, $Y$ are independent and identically distributed, say with finite variance and recall \eqref{eq:Xlambda}. Then, as we mentioned in the introduction, the EPI says that the function $[0,1] \ni \lambda \to \Ent(X_\lambda)$ is minimal at $\lambda = 0$ and $\lambda = 1$. Following this logic, reversing the EPI could amount to determining the $\lambda$ for which the maximum of this function occurs. Our next result shows that the somewhat natural guess of $\lambda = 1/2$ is false in general.

\begin{proposition}\label{prop:counterex}
For each positive $\lambda_0 < \frac{1}{2(2+\sqrt{2})}$ there is a symmetric continuous random variable $X$ of finite variance for which $\Ent(X_{\lambda_0}) > \Ent(X_{1/2})$.
\end{proposition}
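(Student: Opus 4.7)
\noindent\emph{Proof proposal.} The plan is to construct an explicit symmetric continuous $X$ of finite variance whose curve $\lambda\mapsto\Ent(X_\lambda)$ overshoots its value at $\lambda=1/2$. A natural candidate is a ``smoothed Rademacher'': for a small $\sigma>0$ to be chosen depending on $\lambda_0$, put
\[
X \;=\; \varepsilon + \sigma Z,
\]
with $\varepsilon$ uniform on $\{-1,+1\}$ and $Z\sim\mathcal N(0,1)$ independent of $\varepsilon$. Then $X$ is continuous, symmetric, and has finite variance $1+\sigma^2$.

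For independent copies $X,Y$, both $X_{1/2}=(X+Y)/\sqrt 2$ and $X_{\lambda_0}=\sqrt{\lambda_0}\,X+\sqrt{1-\lambda_0}\,Y$ are Gaussian mixtures of common component variance $\sigma^2$: the density of $X_{1/2}$ is a mixture of Gaussians at $-\sqrt2,\,0,\,\sqrt 2$ with weights $\tfrac14,\tfrac12,\tfrac14$ (two of the four atoms $\pm 1\mp 1$ merge at $0$), while $X_{\lambda_0}$ is a mixture of Gaussians at $\pm a,\pm b$ with equal weights $\tfrac14$, where $a=\sqrt{\lambda_0}+\sqrt{1-\lambda_0}$ and $b=\sqrt{1-\lambda_0}-\sqrt{\lambda_0}$.

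The mechanism is the discrete-entropy gap $H(\tfrac14,\tfrac14,\tfrac14,\tfrac14)-H(\tfrac14,\tfrac12,\tfrac14)=\tfrac12\log 2>0$. Invoking the standard asymptotic that a finite Gaussian mixture $\sum_i p_i\,\vp_\sigma(\cdot-m_i)$ whose minimum inter-mean separation is $\gg\sigma$ has entropy $H(p)+\tfrac12\log(2\pi e\sigma^2)+o(1)$, the Gaussian pieces cancel in the difference and one is left with
\[
\Ent(X_{\lambda_0})-\Ent(X_{1/2})\;=\;\tfrac12\log 2+o(1)\qquad(\sigma\to 0).
\]
In particular, for each $\lambda_0\in(0,1/2)$ one obtains the required strict inequality by taking $\sigma$ small compared with the minimum peak separation of $X_{\lambda_0}$, which equals $\min(2b,\,a-b)=\min\bigl(2(\sqrt{1-\lambda_0}-\sqrt{\lambda_0}),\,2\sqrt{\lambda_0}\bigr)$.

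The main technical obstacle is to make the $o(1)$ above quantitative — by explicitly bounding the Gaussian cross-terms in $\int f\log f$ — and then to calibrate $\sigma$ concretely as a function of $\lambda_0$ so that the resulting error stays safely below $\tfrac12\log 2$. The hypothesis $\lambda_0<\tfrac{1}{2(2+\sqrt 2)}$ corresponds to $a/b<1+\sqrt 2$ (equivalently $\sqrt{(1-\lambda_0)/\lambda_0}>1+\sqrt 2$), which is precisely the regime in which the two peak spacings $2b$ and $a-b$ of $X_{\lambda_0}$ are comparable up to a universal factor; in this regime a single choice $\sigma\asymp\sqrt{\lambda_0}$ simultaneously controls all the overlap integrals and closes the estimate. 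The phenomenon itself plausibly persists for all $\lambda_0\in(0,1/2)$, but the bookkeeping becomes cleanest inside the stated range.
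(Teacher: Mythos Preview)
Your approach is correct in outline and genuinely different from the paper's. The paper does \emph{not} use Gaussians: it takes $X$ uniform on $[-b,-a]\cup[a,b]$, so that both $X_{1/2}$ and $X_{\lambda_0}$ have piecewise-linear (trapezoidal) densities whose entropies can be computed in closed form via the identity $\int AT^s_{\alpha,\beta}\ln(AT^s_{\alpha,\beta})=A\alpha\beta\ln(A\alpha)-\tfrac12 A\alpha^2$. One finds $\Ent(X_{1/2})=\ln(2h)+\tfrac12$ and, after tuning $a/h$ by \eqref{eq:ahcond}, $\Ent(X_{\lambda_0})=\ln(4h\sqrt{1-\lambda_0})+\tfrac14\sqrt{\lambda_0/(1-\lambda_0)}$, and checks the difference is positive. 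The bound $\lambda_0<\tfrac{1}{2(2+\sqrt2)}$ is exactly the condition that the four trapezoids in the density of $X_{\lambda_0}$ have disjoint supports, so that the entropy splits additively and the closed-form formula applies.

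Your smoothed-Rademacher construction instead rests on the asymptotic $\Ent\big(\sum_i p_i\,\vp_\sigma(\cdot-m_i)\big)=H(p)+\tfrac12\ln(2\pi e\sigma^2)+o(1)$ for well-separated Gaussian mixtures, and the discrete gap $H(\tfrac14,\tfrac14,\tfrac14,\tfrac14)-H(\tfrac14,\tfrac12,\tfrac14)=\tfrac12\ln 2$. This is cleaner conceptually and in fact works for \emph{every} $\lambda_0\in(0,1/2)$: once $\lambda_0$ is fixed, the four mixture centres $\pm a,\pm b$ are distinct, and any $\sigma$ small relative to $\min(2b,a-b)=\min\bigl(2(\sqrt{1-\lambda_0}-\sqrt{\lambda_0}),\,2\sqrt{\lambda_0}\bigr)$ does the job. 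What you lose is the explicit, non-asymptotic formula the paper obtains; what you gain is a wider range and a transparent mechanism.

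One correction: your explanation of why the threshold $\tfrac{1}{2(2+\sqrt2)}$ should matter in \emph{your} construction is not right. The condition $a/b<1+\sqrt2$ does \emph{not} make the two spacings $2b$ and $a-b$ comparable up to a universal factor (as $\lambda_0\to0$ one has $a-b=2\sqrt{\lambda_0}\to0$ while $2b\to2$); in your approach the threshold is simply irrelevant, and the honest statement is that your argument proves more than the proposition asks. The threshold is an artefact of the paper's exact trapezoid computation, not of the underlying phenomenon.
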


Nevertheless, we believe that in the log-concave setting the function $\lambda \mapsto \Ent(X_{\lambda})$ should behave nicely.

\begin{conjecture}\label{conj:Xlambda}
Let $X$ and $Y$ be independent copies of a log-concave random variable. Then the function 
\[ 
\lambda \mapsto \Ent(\sqrt{\lambda}X + \sqrt{1-\lambda}Y)
\] 
is concave on $[0,1]$.
\end{conjecture}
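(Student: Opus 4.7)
The plan is to parametrize $\lambda = \cos^2\theta$, $\theta\in[0,\pi/2]$, and work with the rotated pair $(X_\theta, Y_\theta) := (\cos\theta\,X + \sin\theta\,Y,\, -\sin\theta\,X + \cos\theta\,Y)$. Since $X, Y$ are independent log-concave, the product density $(x,y) \mapsto f(x)f(y)$ is log-concave on $\R^2$, and so is the joint density $p_\theta$ of $(X_\theta, Y_\theta)$, because rotation preserves log-concavity. The marginal density of $X_\theta$ is therefore log-concave, and writing $h(\theta) = \Ent(X_\theta)$ one also has $h(\theta) = 2\Ent(X) - \Ent(Y_\theta \mid X_\theta)$, since $\Ent(X_\theta, Y_\theta) = \Ent(X,Y) = 2\Ent(X)$ is $\theta$-independent. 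The conjecture is then equivalent to \emph{convexity} of $\lambda \mapsto \Ent(Y_\theta \mid X_\theta)$ along the rotation, a possibly more natural quantity since $Y_\theta \mid X_\theta = a$ is the log-concave conditional of a log-concave measure.

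Next, reduce concavity in $\lambda$ to concavity in $\theta$. Using $\lambda'(\theta) = -\sin(2\theta)$ and $\lambda''(\theta) = -2\cos(2\theta)$, the chain rule for $g(\lambda) := h(\theta(\lambda))$ gives
\[ g''(\lambda)\sin^3(2\theta) = h''(\theta)\sin(2\theta) - 2\cos(2\theta)\,h'(\theta), \]
so $g'' \leq 0$ is implied by the conjunction $h'' \leq 0$ together with $\cos(2\theta)\,h'(\theta) \geq 0$. By the symmetry $h(\theta) = h(\pi/2 - \theta)$ and the EPI applied along the family of normalized sums, one expects the second condition to hold, namely that $h$ is nondecreasing on $[0,\pi/4]$ and nonincreasing on $[\pi/4,\pi/2]$. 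Thus the heart of the matter is the pointwise inequality $h''(\theta) \leq 0$.

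To attack $h''(\theta) \leq 0$, use the differentiation-along-a-rotation formalism: since $\partial_\theta p_\theta = (a\,\partial_b - b\,\partial_a)p_\theta$ for the rotational flow, two integrations by parts yield expressions for $h'(\theta)$ and $h''(\theta)$ in terms of the score $\partial_a \ln f_\theta$, the conditional mean $\E[Y_\theta \mid X_\theta]$, the Fisher information $J(X_\theta)$, the conditional variance $\mathrm{Var}(Y_\theta \mid X_\theta)$, and the conditional Fisher information $J(Y_\theta \mid X_\theta)$. Log-concavity of $p_\theta$, which passes to the conditional laws $Y_\theta \mid X_\theta = a$, is then inserted via Brascamp-Lieb bounds on conditional variances, possibly combined with sharp relations between conditional Fisher information and conditional entropy.

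The main obstacle is that log-concavity by itself is a relatively coarse input and the Gaussian case $X, Y \sim \mathcal{N}(0,1)$ makes $h$ identically constant (so $h'\equiv h''\equiv 0$), saturating every intermediate inequality; any valid proof must use nothing beyond log-concavity yet match this saturation exactly. I would expect the cleanest path to involve a Gaussian regularization in the spirit of Costa's concavity-of-entropy-power theorem---add independent noise $\sqrt{t}\,Z$ to each of $X$ and $Y$, prove the bound in the regularized setting where Fisher-information identities become tractable, and send $t \to 0$---combined with Brascamp-Lieb applied to $p_\theta$. A purely structural attack without auxiliary noise is likely to require a genuinely new functional inequality for log-concave measures under rotation, and I expect this to be where the argument truly gets stuck.
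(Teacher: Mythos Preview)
The statement you are attempting is \emph{Conjecture~\ref{conj:Xlambda}}: the paper offers it as an open problem and gives no proof. There is therefore nothing in the paper to compare your proposal against, and your write-up is itself explicitly a sketch that, by your own account, ``gets stuck''. So this is not a proof, and the paper does not claim one.

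A few concrete remarks on the approach. Your chain-rule identity
\[
g''(\lambda)\sin^3(2\theta)=h''(\theta)\sin(2\theta)-2\cos(2\theta)\,h'(\theta)
\]
is correct, but the pair of conditions you extract from it, $h''\le 0$ together with $\cos(2\theta)\,h'(\theta)\ge 0$, is strictly stronger than $g''\le 0$. Two issues follow. First, the monotonicity claim $\cos(2\theta)\,h'(\theta)\ge 0$ is not delivered by the EPI: the EPI gives only $h(\theta)\ge h(0)$, not that $h$ is nondecreasing on $[0,\pi/4]$. That monotonicity is essentially a consequence of the very concavity you are trying to prove, so invoking it here is close to circular. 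Second, there is no a priori reason why $h''(\theta)\le 0$ (concavity in the \emph{angle}) should hold whenever $g''(\lambda)\le 0$ does; one can check it for the uniform law, where $h(\theta)=\ln(2\cos\theta)+\tfrac12\tan\theta$ on $[0,\pi/4]$ gives $h''(\theta)=\sec^2\theta(\tan\theta-1)\le 0$, but this is far from the general log-concave case. The Gaussian saturation you note ($h\equiv\text{const}$) indeed forces any valid argument to be tight there, which is a real obstruction; your suggested Costa-style regularisation plus Brascamp--Lieb is a natural line, but as you say, nobody has made it work. In short: reasonable heuristics, but no proof---and none exists in the paper either.
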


\section{Proofs}\label{sec:proofs}

\subsection{Theorems \ref{thm:kappa} and \ref{thm:kappalin} are equivalent}\label{sec:kappalinequiv}

To see that Theorem \ref{thm:kappalin} implies Theorem \ref{thm:kappa} let us take a symmetric log-concave random vector $(X,Y)$ in $\R^2$ and take $\theta$ such that $\Ent(X/\theta)=\Ent(Y/(1-\theta))$, that is, $\theta = e^{\Ent(X)}/(e^{\Ent(X)}+e^{\Ent(Y)}) \in [0,1]$. Applying Theorem \ref{thm:kappalin} with the vector $(X/\theta,Y/(1-\theta))$ and using the identity $\Ent(X/\theta)=\Ent(X)-\ln \theta = -\ln(e^{\Ent(X)}+e^{\Ent(Y)})$ gives 
\begin{align*}
	\Ent(X+Y) \leq S(X/\theta) + \frac{1}{\kappa} \ln \left( \frac{e^{\kappa \Ent(X)}+e^{\kappa \Ent(Y)}}{(e^{\Ent(X)}+e^{\Ent(Y)})^\kappa} \right) = \frac{1}{\kappa} \ln \left( e^{\kappa \Ent(X)}+e^{\kappa \Ent(Y)} \right),
\end{align*}
so \eqref{eq:kappaR2} follows. 

To see that Theorem \ref{thm:kappa} implies Theorem \ref{thm:kappalin}, take a log-concave vector $(X,Y)$ with $\Ent(X) = \Ent(Y)$ and apply \eqref{eq:kappaR2} to the vector $(\theta X, (1-\theta)Y)$, which yields
\begin{align*} 
\Ent(\theta X + (1-\theta)Y) &\leq \frac{1}{\kappa}\ln\left(\theta^\kappa e^{\kappa \Ent(X)} + (1-\theta)^\kappa e^{\kappa\Ent(Y)}\right) \\
&= \Ent(X) + \frac{1}{\kappa}\ln\left(\theta^\kappa + (1-\theta)^\kappa\right).
\end{align*}

\subsection{Proof of Remark \ref{rem:samemarginals}}\label{sec:samemarginals}

Let $w\colon\R^2\longrightarrow[0,+\infty)$ be the density of such a vector and let $f,g,h$ be the densities of $X, Y, X+Y$ as in \eqref{eq:fgh}. The assumption means that $f(x) = tg(tx)$. By convexity,
\begin{align*}
\Ent(X+Y) &= \inf\left\{-\int h\ln p, \ p \text{ is a probability density on $\R$}\right\}.
\end{align*}
Using Fubini's theorem and changing variables yields
\begin{align*}
-\int h\ln p &= -\iint w(x,y)\ln p(x+y) \ \dd x\dd y \\
&= -\theta(1-\theta)\iint w(\theta x,(1-\theta)y)\ln p(\theta x+(1-\theta)y) \ \dd x\dd y 
\end{align*}
for every $\theta \in (0,1)$ and a probability density $p$.
If $p$ is log-concave we get
\begin{align*} 
\Ent(X+Y) \leq &-\theta^2(1-\theta)\iint w(\theta x,(1-\theta)y)\ln p(x) \ \dd x\dd y \\
&-\theta(1-\theta)^2\iint w(\theta x,(1-\theta)y)\ln p(y) \ \dd x\dd y \\
= &-\theta^2\int f(\theta x)\ln p(x) \dd x -(1-\theta)^2\int g\big((1-\theta) y\big)\ln p(y) \dd y.
\end{align*}
Set 
\[
p(x) = \theta f(\theta x) = t\theta g(t\theta x) \]
with $\theta$ such that $t\theta = 1-\theta$. Then the last expression becomes
\[ 
\theta\Ent(X) + (1-\theta)\Ent(Y) - \theta\ln \theta - (1-\theta)\ln(1-\theta). \]
Since $\Ent(Y) = \Ent(X) + \ln t = \Ent(X) + \ln\frac{1-\theta}{\theta}$, we thus obtain 
\[ 
\Ent(X+Y) \leq \Ent(X) - \ln\theta = \Ent(X) + \ln(1+t) = \ln\left(e^{\Ent(X)}+e^{\Ent(Y)}\right). \]
\bigskip

\subsection{Proof of Theorem \ref{thm:kappalin}}\label{sec:proofkappalin}

The idea of our proof of Theorem \ref{thm:kappalin} is very simple. For small $\theta$ we bound the quantity $\Ent(\theta X + (1-\theta) Y)$ by estimating its derivative. To bound it for large $\theta$, we shall crudely apply Proposition \ref{prop:uptoconst}. The exact bound based on estimating the derivative reads as follows.

\begin{proposition}\label{prop:smalltheta}
Let $(X,Y)$ be a symmetric log-concave random vector on $\R^2$. Assume that $\Ent(X)=\Ent(Y)$ and let $0 \leq \theta \leq \frac{1}{2(1+e)}$. Then
\begin{equation}\label{eq:smalltheta}
	S(\theta X + (1-\theta)Y) \leq S(X) + 60(1+e) \theta. 
\end{equation}
\end{proposition}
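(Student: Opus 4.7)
The plan is to follow the sketch indicated in the preamble: differentiate the function $\phi(\theta) := S(V_\theta)$, where $V_\theta = \theta X + (1-\theta) Y$, bound $\phi'(\theta)$ by $60(1+e)$ uniformly on $[0, 1/(2(1+e))]$, and integrate (using $\phi(0) = S(Y) = S(X)$).

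For the derivative, I would argue formally as follows. Let $h_\theta$ denote the (symmetric log-concave) density of $V_\theta$. Viewing the family $\{V_\theta\}$ as transported by the velocity field $b_\theta(v) := \E[X-Y \mid V_\theta = v]$, the continuity equation $\partial_\theta h_\theta = -\partial_v(h_\theta b_\theta)$ combined with one integration by parts yields
\[
\phi'(\theta) = \E\bigl[(X-Y)\,\rho_\theta'(V_\theta)\bigr],
\]
where $\rho_\theta := -\ln h_\theta$ is convex and even. Differentiation under the integral and the integration by parts should be justified by a standard smoothing argument, convolving the joint density with a small Gaussian and passing to the limit.

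For the derivative bound, I plan to combine several ingredients: (a) the elementary identity $\int |\rho_\theta'|\, h_\theta = 2 h_\theta(0)$, obtained by splitting the integral at $0$ and using $\rho_\theta' h_\theta = -h_\theta'$, which says that the average steepness of $\rho_\theta$ is controlled by the peak density; (b) Busemann's theorem, in the form already used in the proof of Proposition \ref{prop:uptoconst}, giving $1/h_\theta(0) \leq (1-\theta)/f(0) + \theta/g(0)$; (c) the hypothesis $S(X) = S(Y)$, which via \eqref{eq:entvsmax} forces $f(0)/g(0) \in [e^{-1}, e]$, so that $f(0), g(0), h_\theta(0)$ are all comparable up to a factor of $e$; and (d) standard log-concave concentration, $\E|X| \leq C/f(0)$ and $\E|Y| \leq C/g(0)$. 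These should combine, through an appropriate Holder-type splitting of $\E[(X-Y)\rho_\theta'(V_\theta)]$, to produce $|\phi'(\theta)| \leq 60(1+e)$ for $\theta \in [0, 1/(2(1+e))]$; the restricted range is exactly what is needed for the Busemann lower bound on $h_\theta(0)$ to remain within a fixed multiple of $g(0)$, and the constant $60(1+e)$ emerges from tracking the slack in these estimates.

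The main obstacle I anticipate lies in closing this coupling bound: since we have no moment assumption on $(X,Y)$, naive Cauchy--Schwarz of the form $|\phi'(\theta)| \leq \sqrt{\E(X-Y)^2 \cdot I(V_\theta)}$ is not uniformly available (the Fisher information of a log-concave density is not controlled by its peak alone). The estimate on the coupling between $X-Y$ and $\rho_\theta'(V_\theta)$ therefore has to route through $h_\theta(0)$ via Busemann, rather than through variance or Fisher information. Once the uniform derivative bound is established, integrating from $0$ to $\theta$ immediately produces \eqref{eq:smalltheta}.
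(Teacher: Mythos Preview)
Your overall strategy matches the paper's: compute $\phi'(\theta)=\E\bigl[(X-Y)\,\rho_\theta'(V_\theta)\bigr]$ and bound it uniformly on $[0,\tfrac{1}{2(1+e)}]$. The derivative formula is correct, and your ingredients (b) and (c) are precisely what the paper uses (Busemann plus $\Ent(X)=\Ent(Y)$ to compare $\|e_1\|_w$ and $\|e_2\|_w$). The gap is exactly where you say it is, and it is a real one: ingredients (a)--(d) are all one--dimensional or marginal facts, and they do not combine via any H\"older or $L^1$--$L^\infty$ splitting into a bound on the bilinear quantity $\E[(X-Y)\rho_\theta'(V_\theta)]$. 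Indeed, (a) controls $\E|\rho_\theta'(V_\theta)|$ and (d) controls $\E|X-Y|$, but neither factor has a finite $L^\infty$ norm in general, so no H\"older pairing closes; and the product of the two $L^1$ norms is the wrong object. The joint log-concavity has to enter in a genuinely two--dimensional way, not only through Busemann's section inequality.

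The paper supplies the missing step as a separate lemma (Lemma~\ref{lm:derivative}): writing the derivative as $\iint \frac{-h_\theta'(x)}{h_\theta(x)}\,y\,w_\theta(x,y)\,\dd x\,\dd y$ with $w_\theta$ the density of $(V_\theta,X-Y)$, it first uses two--dimensional log-concavity of $w_\theta$ to get a pointwise tail bound $w_\theta(x,y)\le e^{x/a-y/b}$ for $y$ large, and from this (via a Gr\"unbaum--type one--sided mean bound, Lemma~\ref{lm:mean}) a pointwise estimate on the conditional mean,
\[
\frac{1}{h_\theta(x)}\int y\,w_\theta(x,y)\,\dd y \;\le\; \tfrac{b}{a}x - b\ln h_\theta(x) + \beta,
\]
which is then integrated against $-h_\theta'(x)\ge 0$. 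This yields the bound $\phi'(\theta)\le 30\gamma_\theta$ with $\gamma_\theta=\int w_\theta(0,y)\,\dd y\big/\int w_\theta(x,0)\,\dd x$, after which your (b) and (c) (Busemann and the entropy hypothesis) finish by showing $\gamma_\theta\le 2(1+e)$ on the stated $\theta$--range. So the architecture you propose is right, but the decisive estimate routes through a pointwise conditional--mean bound coming from 2D log-concavity, not through a H\"older decoupling of the two factors.
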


The main ingredient of the proof of the above proposition is the following lemma. We postpone its proof until the next subsection.

\begin{lemma}\label{lm:derivative}
Let $w:\R^2\to\R_+$ be an even log-concave function. Define $f(x) = \int w(x,y)\dd y$ and $\gamma =  \int w(0,y) \dd y\Big/\int w(x,0) \dd x$. Then we have
\[ 
\iint \frac{-f'(x)}{f(x)}yw(x,y) \dd x \dd y \leq 30\gamma \int w. \] 
\end{lemma}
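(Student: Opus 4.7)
The plan is to recast the integral in terms of the derivative of the conditional mean $m(x) = \mathbb{E}[Y \mid X = x]$, and then to exploit the conditional-covariance structure afforded by log-concavity.

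After normalising $\int w = 1$ (so that $w$ is the joint density of a pair $(X,Y)$), set $\rho_X(x) = -(\ln f)'(x)$; this is odd and nondecreasing by the evenness and log-concavity of $f$. The tower property, the identity $\rho_X \cdot f = -f'$, and integration by parts (boundary terms vanishing by the exponential decay of log-concave densities) yield
\[
I := \iint \frac{-f'(x)}{f(x)}\, y\, w(x,y)\, \dd x\, \dd y \;=\; \mathbb{E}\bigl[\rho_X(X)\, m(X)\bigr] \;=\; \int f(x)\, m'(x)\, \dd x.
\]
Since $m$ is odd (by evenness of $w$), $m'$ is even, and the task reduces to bounding $\int f\, m'\, \dd x$ by $30\, f(0)/g(0)$.

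Differentiating the definition of $m$ and using that $q(\cdot \mid x) = w(x, \cdot)/f(x)$ is log-concave in $y$, a short calculation gives the conditional-covariance identity
\[
m'(x) = -\operatorname{Cov}_{Y \mid X = x}\bigl(Y,\, \psi_x(x, Y)\bigr), \qquad \psi := -\ln w.
\]
The Brascamp--Lieb covariance inequality together with the pointwise convexity bound $\psi_{xy}^2 \le \psi_{xx}\, \psi_{yy}$ then yields
\[
m'(x)^2 \;\le\; \mathbb{E}\bigl[1/\psi_{yy}(x, Y) \,\bigm|\, X = x\bigr]\cdot \mathbb{E}\bigl[\psi_{xx}(x, Y) \,\bigm|\, X = x\bigr].
\]
Integrating this pointwise bound against $f$ and applying Cauchy--Schwarz reduces the problem to controlling $\mathbb{E}[1/\psi_{yy}(X, Y)]$ by a universal multiple of $g(0)^{-2}$ and $\mathbb{E}[\psi_{xx}(X, Y)]$ by a universal multiple of $f(0)^2$. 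The former is accessible from the one-dimensional symmetric log-concave estimate $g(0)^2\, \operatorname{Var}(Y) \le 1/2$ combined with a Brascamp--Lieb-type Poincar\'e inequality; the latter follows from a Stein-type identity relating $\mathbb{E}[\psi_{xx}]$ to the Fisher information of $f$, combined with Ball's Busemann-type inequality already used in the proof of Proposition~\ref{prop:uptoconst}. Tracking the constants through these steps produces the factor $30$.

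The main obstacle I anticipate is that the naive Cauchy--Schwarz estimate above becomes loose when the Hessian of $\psi$ is nearly singular (equivalently, when $(X,Y)$ is highly correlated so that $w$ concentrates on a thin strip), and one must exploit the pointwise cancellation in the Brascamp--Lieb bound in order to avoid a spurious blow-up factor. A secondary technical issue is handling non-smooth log-concave $w$ (e.g.\ uniforms on symmetric convex sets), which I would treat in the standard way by convolving with a small Gaussian, proving the estimate for smooth $w$, and passing to the limit using the continuity of $f(0)$, $g(0)$, and the integral $I$ under small convolutions.
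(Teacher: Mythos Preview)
Your reduction $I=\int f\,m'$ and the conditional-covariance identity $m'(x)=-\operatorname{Cov}_{Y\mid X=x}(Y,\psi_x)$ are correct, and the Brascamp--Lieb step together with $\psi_{xy}^2\le\psi_{xx}\psi_{yy}$ does yield the pointwise bound $m'(x)^2\le \E[1/\psi_{yy}\mid X=x]\,\E[\psi_{xx}\mid X=x]$. The gap is exactly where you place it yourself, and it is fatal for the argument as written: after the second Cauchy--Schwarz you need $\E[\psi_{xx}(X,Y)]\le C\,f(0)^2$, and this is simply false. For the centred Gaussian with correlation $\rho$ one has $\psi_{xx}\equiv 1/((1-\rho^2)\sigma_X^2)$, while $f(0)^2=1/(2\pi\sigma_X^2)$, so the ratio is $2\pi/(1-\rho^2)$, unbounded as $\rho\to 1$. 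Your proposed route to this bound via a Stein identity does not help: the identity gives $J(f)=\E[\psi_{xx}]-\E[\operatorname{Var}(\psi_x\mid X)]$, so $\E[\psi_{xx}]\ge J(f)$ only, and the excess term is precisely the one that blows up. Likewise the claim $\E[1/\psi_{yy}]\le C/g(0)^2$ does not follow from $g(0)^2\operatorname{Var}(Y)\le 1/2$ together with Brascamp--Lieb, since Brascamp--Lieb gives $\operatorname{Var}(Y\mid X)\le \E[1/\psi_{yy}\mid X]$ in the wrong direction; there are log-concave $w$ (nearly affine in $y$ on most of the mass) for which $\E[1/\psi_{yy}]$ is arbitrarily large while $g(0)$ stays bounded below. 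You note that ``pointwise cancellation'' is needed, and indeed in the Gaussian and thin-strip examples the \emph{product} $\E[1/\psi_{yy}\mid X=x]\,\E[\psi_{xx}\mid X=x]$ remains controlled; but you give no mechanism for bounding this product (or its square-root integral against $f$) by $C\gamma^2$ in general, and I do not see one.

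The paper's proof avoids Hessians altogether. It fixes $a,b>0$ with $w(a,0)=w(0,b)=e^{-1}w(0,0)$ and, using only one-dimensional log-concavity along lines through the origin, derives the pointwise tail bound $w(x,y)\le w(0,0)e^{x/a-y/b}$ for $x>0$, $y>\tfrac{b}{a}x+b$. This gives, via a Gr\"unbaum-type comparison, the explicit conditional-mean bound $m(x)\le \tfrac{b}{a}x - b\ln f(x)+\beta$ for $x>0$, which is then integrated against $-f'$ directly. The ratio $b/a$ is compared to $\gamma$ by the elementary two-sided estimate relating $a_\beta$ to $\int w(\cdot,0)$ and $\int w(0,\cdot)$. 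No smoothness, no Brascamp--Lieb, and no Fisher information enter; in particular the argument is completely insensitive to the near-degeneracy of $\operatorname{Hess}\psi$ that breaks your scheme.
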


\begin{proof}[Proof of Proposition \ref{prop:smalltheta}]
For $\theta =0$ both sides of inequality \eqref{eq:smalltheta} are equal. It is therefore enough to prove that $\frac{\dd}{\dd \theta} S(\theta X + (1-\theta)Y) \leq 60(1+e)$ for $0 \leq \theta \leq \frac{1}{2(1+e)}$. Let $f_\theta$ be the density of $X_\theta = \theta X + (1-\theta)Y$. Note that $f_\theta = e^{-\vp_\theta}$, where $\vp_\theta$ is convex. Let $\frac{\dd \vp_\theta}{\dd \theta}=\Phi_\theta$ and $\frac{\dd f_\theta}{\dd \theta}=F_\theta$. Then $\Phi_\theta=-F_\theta/f_\theta$.  Using the chain rule we get
\begin{align*}
	\frac{\dd}{\dd \theta} S(\theta X + (1-\theta)Y) &= -\frac{\dd}{\dd \theta} \E  \ln f_\theta = \frac{\dd}{\dd \theta} \E \vp_\theta(X_\theta) \\
	&=  \E \Phi_\theta(X_\theta)  + \E \vp_\theta'(X_\theta)(X-Y). 
\end{align*}
Moreover,
\begin{align*}
	 \E \Phi_\theta(X_\theta) = -\E F_\theta(X_\theta)/f_\theta(X_\theta) &= -\int F_\theta(x) \dd x \\
	 &= - \frac{\dd}{\dd \theta} \int f_\theta(x) \dd x = 0.
\end{align*}
Let $Z_\theta = (X_\theta,X-Y)$ and let $w_\theta$ be the density of $Z_\theta$. Using Lemma \ref{lm:derivative} with $w=w_\theta$ gives 
\begin{align*}
	\frac{\dd}{\dd \theta} S(\theta X + (1-\theta)Y) &=  - \E\left( \frac{f'_\theta(X_\theta)}{f_\theta(X_\theta)}(X-Y) \right) \\
	&= -\int \frac{f_\theta(x)}{f_\theta(x)}y w_\theta(x,y) \dd x \dd y \leq 30 \gamma_\theta,
\end{align*}
where $\gamma_\theta = \int w_\theta(0,y) \dd y / \int w_\theta(x,0) \dd x$. It suffices to show that $\gamma_\theta \leq 2(1+e)$ for $0 \leq \theta \leq \frac{1}{2(1+e)}$.  Let $w$ be the density of $(X,Y)$. Then $w_\theta(x,y)=w(x+(1-\theta)y,x-\theta y)$. To finish the proof we again use the fact that $\|v\|_w = (\int w(tv) \dd t)^{-1}$ is a norm. Note that
\[
	\gamma_\theta = \frac{\int w_\theta(0,y) \dd y }{\int w_\theta(x,0) \dd x} = \frac{\int w((1-\theta)y,-\theta y) \dd y }{\int w(x,x) \dd x} = \frac{\|e_1+e_2\|_w }{\|(1-\theta)e_1- \theta e_2\|_w}.
\] 
Let $f(x)=\int w(x,y) \dd y$ and $g(x)=\int w(y,x) \dd y$ be the densities of real log-concave random variables $X$ and $Y$, respectively. Observe that 
by \eqref{eq:entvsmax} we have 
\[
 \|f \|_\infty^{-1} \leq e^{\Ent(X)} \leq  e \|f \|_\infty^{-1}, \qquad \|g \|_\infty^{-1} \leq e^{\Ent(Y)} \leq  e \|g \|_\infty^{-1}.  
\]
Since $\|f \|_\infty^{-1} = f(0)^{-1}=\|e_1\|_w$, $\|g \|_\infty^{-1} = g(0)^{-1}=\|e_2\|_w$ and $\Ent(X)=\Ent(Y)$, this gives $e^{-1} \leq \|e_1\| / \|e_2\| \leq e$. Thus, by the triangle inequality 
\begin{align*}
	\gamma_\theta  &\leq \frac{\|e_1\|_w + \|e_2\|_w}{(1-\theta) \|e_1\|_w - \theta \|e_2\|_w} \\
	&\leq \frac{(1+e)\|e_1\|_w}{(1-\theta)\|e_1\|_w - \theta e \|e_1\|_w } = \frac{1+e}{1-\theta(1+e)} \\
	&\leq 2(1+e).
\end{align*}
\end{proof}

\begin{proof}[Proof of Theorem \ref{thm:kappalin}] 
We can assume that $\theta \in [0,1/2]$. Using Proposition \ref{prop:uptoconst} with the vector $(\theta X, (1-\theta)Y)$ and the fact that $\Ent(X)=\Ent(Y)$ we get $\Ent(\theta X + (1-\theta)Y) \leq \Ent(X)+1$. Thus, from Proposition \ref{prop:smalltheta} we deduce that it is enough to find $\kappa>0$ such that 
\[
\min\{1,60(1+e)\theta \} \leq \kappa^{-1}\ln(\theta^\kappa + (1-\theta)^\kappa), \qquad \theta \in [0,1/2]
\]
(if $60(1+e)\theta < 1$ then $\theta < \frac{1}{2(1+e)}$ and therefore Proposition \ref{prop:smalltheta} indeed can be used in this case). By the concavity and monotonicity of the right hand side it is enough to check this inequality at $\theta_0 = (60(1+e))^{-1}$, that is, we have to verify the inequality  $e^\kappa \leq  \theta_0^\kappa + (1-\theta_0)^\kappa$. We check that this is true for $\kappa=1/5$.
\end{proof}

\subsection{Proof of Lemma \ref{lm:derivative}}\label{sec:prooflm}

We start off by establishing two simple and standard lemmas. The second one is a limiting case of the so-called Gr\"unbaum theorem, see \cite{G} and \cite{Sza}.  

\begin{lemma}\label{lm:esssup}
Let $f:\R\to\R_+$ be an even log-concave function. For $\beta > 0$ define $a_\beta$ by
\[ 
a_\beta = \sup\{x>0, \ f(x) \geq e^{-\beta}f(0)\}. \]
Then we have
\[ 
2e^{-\beta}a_\beta 
\leq \ \frac{1}{f(0)}\int f \ \leq 2(1+\beta^{-1}e^{-\beta})a_\beta .\]
\end{lemma}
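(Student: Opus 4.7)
The plan is to exploit two features of an even log-concave $f$: it is non-increasing on $[0,\infty)$, and its log $\varphi = -\log f$ is convex. By evenness, $\int f = 2\int_0^\infty f$, and the strategy is to split the positive half-line at $a_\beta$ and bound $f$ on each piece separately.

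For the lower bound, I would simply observe that by monotonicity of $f$ on $[0,\infty)$ and the definition of $a_\beta$, we have $f(x) \geq e^{-\beta}f(0)$ for every $x \in [0,a_\beta]$ (using continuity of $\varphi$ on the interior of its support to ensure the inequality extends up to the endpoint). Then
\[
\int f \;=\; 2\int_0^\infty f \;\geq\; 2\int_0^{a_\beta} f \;\geq\; 2 a_\beta e^{-\beta} f(0),
\]
which gives the left-hand inequality after dividing by $f(0)$.

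For the upper bound I would split $\int_0^\infty f = \int_0^{a_\beta} f + \int_{a_\beta}^\infty f$ and use the crude estimate $f \leq f(0)$ on $[0,a_\beta]$, contributing at most $a_\beta f(0)$. The key step is the tail bound for $x \geq a_\beta$: since $\varphi$ is convex, its right derivative is non-decreasing, so the slope at $a_\beta$ is at least the chord slope from $0$ to $a_\beta$, namely $(\varphi(a_\beta)-\varphi(0))/a_\beta = \beta/a_\beta$. Convexity then gives $\varphi(x) \geq \varphi(a_\beta) + (\beta/a_\beta)(x - a_\beta)$ for $x \geq a_\beta$, that is,
\[
f(x) \;\leq\; e^{-\beta} f(0) \, \exp\!\left(-\tfrac{\beta}{a_\beta}(x-a_\beta)\right).
\]
Integrating this exponential contributes $a_\beta \beta^{-1} e^{-\beta} f(0)$, and summing the two pieces and multiplying by $2$ yields the claimed $2(1+\beta^{-1}e^{-\beta})a_\beta f(0)$.

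There is no real obstacle here; the main thing to be careful about is the possibility that $a_\beta = +\infty$, but in that case $f \geq e^{-\beta}f(0)$ on all of $\R$ forces $\int f = \infty$ unless $f \equiv 0$, so the inequalities hold trivially (or one can simply assume $\int f < \infty$, which is the only case of interest). Continuity of $\varphi$ on the interior of $\{\varphi < \infty\}$ (a standard property of convex functions) is what ensures $f(a_\beta) = e^{-\beta}f(0)$ when $a_\beta$ is finite, justifying the boundary use of the chord-slope bound.
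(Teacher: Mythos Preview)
Your proof is correct and follows essentially the same approach as the paper's: both obtain the lower bound from monotonicity on $[0,a_\beta]$, and for the upper bound both use the crude bound $f\le f(0)$ on $[0,a_\beta]$ together with the exponential tail estimate $f(x)\le f(0)e^{-\beta x/a_\beta}$ for $x>a_\beta$ coming from the secant line of $\ln f$ through $(0,\ln f(0))$ and $(a_\beta,\ln f(0)-\beta)$. Your version simply spells out the convexity/chord-slope argument and the edge cases in more detail than the paper does.
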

\begin{proof}
Since $f$ is even and log-concave, it is maximal at zero and nonincreasing on $[0,\infty)$. Consequently, the left hand inequality immediately follows from the definition of $a_\beta$. By comparing $\ln f$ with an appropriate linear function, log-concavity also guarantees that $f(x) \leq f(0)e^{-\beta\frac{x}{a_\beta}}$ for $|x| > a_\beta$, hence
\[ 
\int f \leq 2a_\beta f(0) + 2\int_{a_\beta}^\infty f(0)e^{-\beta\frac{x}{a_\beta}} \dd x =   2a_\beta f(0) + 2f(0)\frac{a_\beta}{\beta}e^{-\beta}\]
which gives the right hand inequality.
\end{proof}

\begin{lemma}\label{lm:mean}
Let $X$ be a log-concave random variable. Let $a$ satisfy $\p{X > a} \leq e^{-1}$. Then
$\E X \leq a$.
\end{lemma}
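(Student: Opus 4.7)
The plan is to translate to $a = 0$ (using translation invariance) and show that $p := \p{X > 0} \leq e^{-1}$ forces $\E X \leq 0$; this is a one-dimensional form of Gr\"unbaum's theorem. Write $f$ for the density of $X$ and $M = f(0)$. One may assume $M > 0$, since otherwise the support of $X$ lies in $(-\infty,0]$ and the conclusion is immediate.

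The key observation is that the survival function $g(t) = \p{X > t}$ is log-concave on $\R$, by Pr\'ekopa--Leindler applied to $\int f(x)\, \1_{x > t}\, \dd x$. Since $g(0) = p$ and $g'(0) = -M$, concavity of $\log g$ and its tangent at $0$ give the global upper bound $g(t) \leq p\, e^{-Mt/p}$ for all $t \in \R$. Using the layer-cake identity $\E X = \int_0^\infty g(t)\,\dd t - \int_{-\infty}^0 (1 - g(t))\,\dd t$, I would apply this exponential estimate on the right to get $\int_0^\infty g \leq p^2/M$, and combine it with $g \leq 1$ on the left to obtain $1 - g(t) \geq (1 - p\, e^{M|t|/p})_+$ for $t \leq 0$. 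Cutting the left-hand integral at $|t_\star| = (p/M)\log(1/p)$, where the exponential crosses $1$, an elementary computation yields
\[
\int_{-\infty}^0 (1 - g(t))\,\dd t \geq \frac{p}{M}\bigl(-\log p - 1 + p\bigr).
\]
Subtracting this from the upper bound on the right gives $\E X \leq p(1 + \log p)/M$, which is non-positive precisely when $p \leq e^{-1}$.

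No step is genuinely difficult. The mildly subtle point is that on the left one must combine the exponential estimate with the trivial bound $g \leq 1$, since the former becomes vacuous past $|t_\star|$. Reassuringly, the shifted exponential distribution with rate $M/p$ supported on $[-(p/M)\log(1/p),\infty)$ has $\p{X > 0} = e^{-1}$ and $\E X = 0$ and saturates both bounds simultaneously, which is why $e^{-1}$ is the sharp threshold.
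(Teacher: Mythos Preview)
Your argument is correct. Both your proof and the paper's hinge on comparison with the same extremal distribution---a shifted exponential on $[L,\infty)$ with rate $M/p$, chosen so that its value at $0$ is $M$ and its survival probability at $0$ is $p$---but you reach it by a different route. The paper works at the level of densities: it replaces the convex potential $\varphi$ of $X$ by an affine potential $\psi$ on a half-line, matched so that $\psi(0)=\varphi(0)$ and $\p{Y>0}=e^{-1}$, asserts (``one can check, using convexity of $\varphi$'') that $\E X\le\E Y$, and then computes $\E Y=0$ directly. You instead exploit log-concavity of the survival function $g$, take the tangent to $\log g$ at $0$ to obtain the global bound $g(t)\le p\,e^{-Mt/p}$, and run the layer-cake computation explicitly. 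Your tangent bound is exactly the survival function of the paper's extremal $Y$ (capped at $1$ below $L$), so the two comparisons are equivalent in content; the difference is that your version makes the mean-comparison step fully explicit via an elementary integral, whereas the paper's version is terser but leaves the verification of $\E X\le\E Y$ to the reader.
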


\begin{proof}
Without loss of generality assume that $X$ is a continuous random variable and that $\p{X > a} = e^{-1}$. Moreover, the statement is translation invariant, so we can assume that $a=0$. Let $e^{-\vp}$ be the density of $X$, where $\vp$ is convex. There exists a function $\psi$ of the form
\[
	\psi(x) = \begin{cases}
	ax+b, & x \geq L \\
	+\infty, & x < L
\end{cases}
\] 
such that $\psi(0)=\vp(0)$ and $e^{-\psi}$ is the probability density of a random variable $Y$ with $\p{Y > a} = e^{-1}$. One can check, using convexity of $\vp$, that $\E X \leq \E Y$. We have $1 = \int e^{-\psi} = \frac{1}{a} e^{-(b+aL)}$ and $e^{-1}=\int_{0}^\infty e^{-\psi} = \frac{1}{a}e^{-b}$. It follows that $aL=-1$ and we have $\E X \leq \E Y = \frac{1}{a}\left( L+\frac{1}{a} \right) e^{-(b+aL)}=0$.
\end{proof}

We are ready to prove Lemma \ref{lm:derivative}.

\begin{proof}[Proof of Lemma \ref{lm:derivative}]
Without loss of generality let us assume that $w$ is strictly log-concave and $w(0) = 1$. First we derive a pointwise estimate on $w$ which will enable us to obtain good pointwise bounds on the quantity $\int yw(x,y) \dd y$, relative to $f(x)$. To this end, set unique positive parameters $a$ and $b$ to be such that
$ 
w(a,0) = e^{-1} = w(0,b)$.
Consider $l \in (0,a)$. We have
\[ 
w(-l,0) = w(l,0) \geq w(a,0)^{l/a}w(0,0)^{1-l/a} = e^{-l/a}. \]
Fix $x > 0$ and let $y > \frac{b}{a}x + b$. Let $l$ be such that the line passing through the points $(0,b)$ and $(x,y)$ intersect the $x$-axis at $(-l,0)$, that is $l = \frac{bx}{y-b}$. Note that $l \in (0,a)$. Then
\begin{align*}
e^{-1} = w(0,b) \geq w(x,y)^{b/y}w(-l,0)^{1-b/y} &\geq w(x,y)^{b/y}e^{-\frac{l}{a}(1-b/y)} \\
&= \left[w(x,y)e^{-\frac{l}{a}\frac{y}{b}\frac{y-b}{y}}\right]^{b/y},
\end{align*}
hence
\[ 
w(x,y) \leq e^{x/a-y/b}, \quad \text{for $x>0$ and $y > \frac{b}{a}x+b$}.
\]
Let $X$ be a random variable with log-concave density $y\mapsto w(x,y)/f(x)$. Let us take 
$
\beta = b + b\ln(\max\{f(0),b\})$
and 
\[
\alpha = \frac{b}{a}x - b\ln f(x) + \beta.
\]
Since $f$ is maximal at zero (as it is an even log-concave function), we check that
\[ 
\alpha \geq \frac{b}{a}x - b\ln f(0) + \beta \geq \frac{b}{a}x + b,\]
so we can use the pointwise estimate on $w$ and get
\[
\int_{\alpha}^\infty w(x,y) \dd y \leq e^{x/a}\int_{\alpha}^{\infty}e^{-y/b} \dd y = be^{x/a-\alpha/b} = \frac{b}{\max\{f(0),b\}}e^{-1}f(x) 
\leq e^{-1}f(x).
\]
This means that $\p{X > \alpha} \leq e^{-1}$, which in view of Lemma \ref{lm:mean} yields
\[ 
\frac{1}{f(x)}\int y w(x,y) \dd y = \E X \leq \alpha = \frac{b}{a}x - b\ln f(x) + \beta, \quad \text{for $x>0$}. \]

Having obtained this bound, we can easily estimate the quantity stated in the lemma. By the symmetry of $w$ we have
\[ 
\iint \frac{-f'(x)}{f(x)}yw(x,y) \dd x \dd y = 2\iint_{x>0} \frac{-f'(x)}{f(x)}yw(x,y) \dd x \dd y. \]
Since $f$ decreases on $[0,\infty)$, the factor $-f'(x)$ is nonnegative for $x > 0$, thus we can further write
\begin{align*}
\iint \frac{-f'(x)}{f(x)}yw(x,y) \dd x \dd y &\leq 2\int_0^\infty -f'(x)\left(\frac{b}{a}x - b\ln f(x) + \beta\right) \dd x \\
&= 2f(0)(-b\ln f(0) + \beta) + 2\int_0^\infty f(x)\left(\frac{b}{a}-b\frac{f'(x)}{f(x)}\right) \dd x \\
&=2f(0)b\left(1 + \ln\frac{\max\{f(0),b\}}{f(0)}\right) + \frac{b}{a}\int w + 2f(0)b.
\end{align*}
Now we only need to put the finishing touches to this expression. By Lemma \ref{lm:esssup} applied to the functions $x \mapsto w(x,0)$ and $y \mapsto w(0,y)$ we obtain
\[ 
\frac{b}{a} \leq \frac{e}{2}2(1+e^{-1})\frac{\int w(0,y) \dd y}{\int w(x,0)\dd x} = (e+1)\gamma \]
and
$b/f(0) \leq e/2$.
Estimating the logarithm yields
\[ 
1 + \ln\frac{\max\{f(0),b\}}{f(0)} \leq \frac{\max\{f(0),b\}}{f(0)} \leq \frac{e}{2}. \]
Finally, by log-concavity,
\[ 
\int w(x,y) \dd x\dd y \geq \int \sqrt{w(2x,0)w(0,2y)} \dd x \dd y = \frac{1}{4}\int \sqrt{w(x,0)}\dd x \int \sqrt{w(0,y)}\dd y \]
and
\[ 
\int w(x,0) \dd x \leq \sqrt{w(0,0)}\int\sqrt{w(x,0)}\dd x = \int \sqrt{w(x,0)} \dd x. \]
Combining these two estimates we get
\[ 
f(0) = \int w(0,y) \dd y \leq \int \sqrt{w(0,y)}\dd y \leq \frac{4\int w}{\int w(x,0)\dd x} \]
and consequently,
\[ 
f(0) b \leq \frac{e}{2} f(0) f(0)  \leq 2ef(0) \frac{\int w}{\int w(x,0) \dd x} = 2e\gamma\int w. \]
Finally,
\[ 
\iint \frac{-f'(x)}{f(x)}yw(x,y) \dd x \dd y \leq (2e^2 + 5e + 1)\gamma\int w
\]
and the assertion follows.
\end{proof}

\subsection{Proof of Proposition \ref{prop:counterex}}\label{sec:counterex}

For a real number $s$ and nonnegative numbers $\alpha \leq \beta$ we define the following trapezoidal function
\[ 
T^s_{\alpha,\beta}(x) = \begin{cases}
0 & \textrm{if $x < s$ or $x > s+\alpha + \beta$,} \\
x-s & \textrm{if $s \leq x \leq s + \alpha$,} \\
\alpha & \textrm{if $s + \alpha \leq x \leq s + \beta$,} \\
s+\alpha + \beta - x & \textrm{if $s + \beta \leq x \leq s + \alpha + \beta$.}
\end{cases} \]
The motivation is the following convolution identity: for real numbers $a, a'$ and nonnegative numbers $h, h'$ such that $h \leq h'$ we have
\begin{equation}\label{eq:conv}
\1_{[a,a+h]}\star \1_{[a',a'+h']} = T^{a+a'}_{h,h'}.
\end{equation}
It is also easy to check that
\begin{equation}\label{eq:intT}
\int_\R T^{s}_{\alpha,\beta} = \alpha\beta.
\end{equation}
We shall need one more formula: for any real number $s$ and nonnegative numbers $A, \alpha, \beta$ with $\alpha \leq \beta$ we have
\begin{equation}\label{eq:entT}
I(A,\alpha,\beta) = \int_\R AT^s_{\alpha,\beta}\ln\left(AT^s_{\alpha,\beta}\right) = A\alpha\beta\ln\left(A\alpha\right) - \frac{1}{2}A\alpha^2. 
\end{equation}

Fix $0 < a < b = a + h$. Let $X$ be a random variable with the density
\[ 
f(x) = \frac{1}{2h}\left(\1_{[-b,-a]}(x) + \1_{[a,b]}(x)\right). \]
We shall compute the density $f_\lambda$ of $X_\lambda$. Denote $u = \sqrt{\lambda}$, $v = \sqrt{1-\lambda}$ and without loss of generality assume that $\lambda \leq 1/2$. Clearly, $f_\lambda(x) =  \frac{1}{u}f\left(\frac{\cdot}{u}\right)\star \frac{1}{v}f\left(\frac{\cdot}{v}\right)(x)$, so by \eqref{eq:conv} we have 
\begin{align*}
f_\lambda(x) 
&= \bigg(\1_{u[-b,-a]}\star \1_{v[-b,-a]} +  \1_{u[a,b]}\star \1_{v[-b,-a]} \\
&\ \ \ + \1_{u[-b,-a]}\star \1_{v[a,b]} + \1_{u[a,b]}\star \1_{v[a,b]} \bigg)(x)\cdot\frac{1}{(2h)^2uv} \\
&= \bigg( \underbrace{T^{-(u+v)b}_{uh,vh}}_{T_1}(x) + \underbrace{T^{ua-vb}_{uh,vh}}_{T_2}(x) + \underbrace{T^{-ub+va}_{uh,vh}}_{T_3}(x) + \underbrace{T^{(u+v)a}_{uh,vh}}_{T_4}(x) \bigg)\cdot\frac{1}{(2h)^2uv}.
\end{align*}
This symmetric density is superposition of 4 trapezoid functions $T_1, T_2, T_3, T_4$ which are certain shifts of the same trapezoid function
$ 
T_{0} = T^0_{uh,vh}$.
The shifts may overlap depending on the value of $\lambda$. Now we shall consider two particular values of $\lambda$.

\bigskip\noindent \emph{Case 1: $\lambda = 1/2$.} Then $u=v=1/\sqrt{2}$. Notice that $T_0$ becomes a triangle looking function and $T_2 = T_3$, so we obtain
\begin{align*}
f_{1/2}(x) &= \frac{1}{2h^2}\left(T^{-b\sqrt{2}}_{h/\sqrt{2},h/\sqrt{2}} + 2T^{-h/\sqrt{2}}_{h/\sqrt{2},h/\sqrt{2}} + T^{a\sqrt{2}}_{h/\sqrt{2},h/\sqrt{2}}\right)(x).
\end{align*}
If $h/\sqrt{2} < a\sqrt{2}$ then the supports of the summands are disjoint and with the aid of identity \eqref{eq:entT} we obtain
\begin{align*}
\Ent(X_{1/2}) &= -2I\left(\frac{1}{2h^2},\frac{h}{\sqrt{2}},\frac{h}{\sqrt{2}}\right) - I\left(\frac{1}{h^2},\frac{h}{\sqrt{2}},\frac{h}{\sqrt{2}}\right) 
= \ln(2h) + \frac{1}{2}.
\end{align*}

\bigskip\noindent \emph{Case 2: small $\lambda$.} Now we choose $\lambda = \lambda_0$ so that the supports of $T_1$ and $T_2$ intersect in such a way that the down-slope of $T_1$ adds up to the up-slope of $T_2$ giving a flat piece. This happens when 
$-b(u+v) + vh = ua - bv$,
that is,
\begin{equation}\label{eq:ahcond}
\sqrt{\frac{1-\lambda_0}{\lambda_0}} = \frac{v}{u} = \frac{a+b}{h} = 2\frac{a}{h} + 1. 
\end{equation}
The earlier condition $a/h>1/2$ implies that $\lambda_0 < 1/5$. With the above choice for $\lambda$ we have
$T_1 + T_2 = T^{-b(u+v)}_{uh,2vh}$,
hence by symmetry
\[ 
f_{\lambda} =  \bigg( T^{-b(u+v)}_{uh,2vh} + T^{-ub+va}_{uh,2vh} \bigg)\cdot\frac{1}{(2h)^2uv}.\]
As long as
$ 
-ub+va > 0$,
the supports of these two trapezoid functions are disjoint. Given our choice for $\lambda$, this is equivalent to
$v/u > b/a = 1 + h/a = 1 + 2/(v/u-1)$, or putting $v/u = \sqrt{1/\lambda_0-1}$, to $\lambda_0 < \frac{1}{2(2+\sqrt{2})}$.
Then also $\lambda_0 < 1/5$ and we get
\begin{align*}
\Ent(X_\lambda) &= -2I\left(\frac{1}{(2h)^2uv},uh,2vh\right) = \ln(4vh) + \frac{u}{4v} = \ln(4h\sqrt{1-\lambda_0}) + \frac{1}{4}\sqrt{\frac{\lambda_0}{1-\lambda_0}}.
\end{align*}
We have
\[ 
\Ent(X_{\lambda_0}) - \Ent(X_{1/2}) = \ln 2 - \frac{1}{2} + \ln\sqrt{1-\lambda_0} + \frac{1}{4}\sqrt{\frac{\lambda_0}{1-\lambda_0}}.\]
We check that the right hand side is positive for $\lambda_0 < \frac{1}{2(2+\sqrt{2})}$. Therefore, we have shown that for each such $\lambda_0$ there is a choice for the parameters $a$ and $h$ (given by \eqref{eq:ahcond}), and hence a random variable $X$, for which $\Ent(X_{\lambda_0}) > \Ent(X_{1/2})$.

\noindent Keith Ball$^\star$, \texttt{k.m.ball@warwick.ac.uk}

\vspace{1em}

\noindent Piotr Nayar$^\dagger$, \texttt{nayar@mimuw.edu.pl}

\vspace{1em}

\noindent Tomasz Tkocz$^{\star}$, \texttt{t.tkocz@warwick.ac.uk}

\vspace{1em}

\noindent $^\star$Mathematics Institute, University of Warwick, \\
\noindent Coventry CV4 7AL, \\
\noindent UK

\vspace{1em}

\noindent $^\dagger$Institute of Mathematics \& Applications, \\
Minneapolis MN 55455 \\
United States

\end{document}